\documentclass[a4paper,11pt]{article}
\usepackage{latexsym}
\usepackage{amsmath}
\usepackage{amsthm}
\usepackage[pdftex]{graphicx}
\usepackage{float}
\usepackage{mathrsfs}
\usepackage{hyperref}									
\usepackage{amssymb}
\usepackage{amsfonts}

\theoremstyle{thm} \newtheorem{thm}{Theorem}

\newtheorem{lem}[thm]{Lemma}
\newtheorem{cor}[thm]{Corollary}
\theoremstyle{definition} \newtheorem{mydef}[thm]{Definition}
\theoremstyle{remark} \newtheorem{rem}[thm]{Remark}
\theoremstyle{definition} 

\newcommand{\C}{\mathbb C}								
\newcommand{\Z}{\mathbb Z}								
\newcommand{\tth}{\text{th}}								
\newcommand{\imod}[1]{\left( \bmod{#1} \right)}					

\begin{document}

\title{Complete Residue Systems: A Primer and an Application}
\author{Pietro Paparella\thanks{Department of Mathematics, Washington State University, Pullman, WA 99164-3113, USA (\href{mailto: ppaparella@math.wsu.edu}{\texttt{ppaparella@math.wsu.edu}}). The material in this paper is part of the author's doctoral dissertation in preparation at Washington State University.}}
\date{}
\maketitle

\begin{abstract}
\noindent
{\it Complete residue systems} play an integral role in abstract algebra and number theory, and a description is typically found in any number theory textbook. This note provides a concise overview of complete residue systems, including a robust definition, several well-known results, a proof to the converse of a well-known theorem, ancillary results pertaining to an application arising from the study of the roots of nonnegative matrices, and extends our knowledge of complete residue systems in relation to complete sets of roots of unity.

\noindent \\
{\bf Keywords:} complete residue system, greatest common divisor, root of unity. 
\end{abstract}

\section{Introduction}

Complete residue systems play an integral role in abstract algebra and number theory, and a description is typically found in any number theory textbook (see, for instance, \cite{MR1382656,MR2445243,MR0092794}). In Section 1, we provide a concise overview of complete residue systems, including a robust definition, several well-known results, and a proof to the converse of a well-known theorem. In Section 2, we derive ancillary results pertaining to an application arising from the study of the roots of nonnegative matrices, which extend, to a certain extent, Theorems 65 and 66 in \cite{MR2445243}.

\section{Complete Residue Systems}

Our discussion of complete residue systems begins with the following definition.

\begin{mydef}
Let $R = \left\{ a_0, a_1, \dots, a_{h-1} \right\}$ be a set of integers, and for $h>1$ an integer, let $R(h) = \{ 0, 1, \dots, h-1\}$. Then $R$ is said to be a {\it complete residue system} $\imod{h}$, denoted CRS$(h)$, if the map $f : R \rightarrow R(h)$ defined by 
\[ a_i \rightarrow q_i := a_i \imod{h} \] 
is injective or, equivalently, surjective.
\end{mydef}

With the aforementioned definition, we readily glean the following equivalent properties if $R$ is a CRS$(h)$:
\begin{enumerate}
\item[(P1)] if $i \neq j$, then $q_i \neq q_j$; 
\item[(P2)] if $i \neq j$, then $a_i \not \equiv a_j \imod{h}$; and
\item[(P3)] for $a \in \Z$, if $a \equiv a_i \imod{h}$, then $a \not \equiv a_j \imod{h}$ for all $j \neq i$. 
\end{enumerate}

The next two results are well-known number-theoretic results (see, for instance, \cite[Theorem 3.3]{MR1382656}, \cite[Theorem 4.6 and Corollary 4.7]{MR1008474} or \cite[Theorems 54 and 55]{MR2445243}).

\begin{thm} \label{2} If $\gcd{(k,m)} = d$, then $ka \equiv kb \imod{m}$ if and only if $a \equiv b \imod{(m/d)}$.
\end{thm}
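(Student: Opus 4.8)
The plan is to prove both directions of the biconditional directly from the definition of congruence, leveraging the standard fact that $\gcd(k/d, m/d) = 1$ after dividing out the gcd $d = \gcd(k,m)$. Throughout I write $m = d\,m'$ and $k = d\,k'$, so that $m' = m/d$ and $\gcd(k', m') = 1$. The congruence $ka \equiv kb \imod{m}$ means $m \mid k(a-b)$, and the goal is to translate this divisibility statement into one about $m'$ dividing $a - b$.

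\medskip

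\textbf{Reverse direction.} First I would handle the easier implication. Assuming $a \equiv b \imod{m'}$, we have $m' \mid (a-b)$, so $a - b = m'\,t$ for some integer $t$. Multiplying by $k = dk'$ gives $k(a-b) = dk'\,m'\,t = k'(dm')t = k'\,m\,t$, which is visibly divisible by $m$. Hence $m \mid k(a-b)$, i.e. $ka \equiv kb \imod{m}$. This direction uses only that $d \mid k$ and $dm' = m$, not coprimality.

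\medskip

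\textbf{Forward direction.} This is where the coprimality is essential and where I expect the main obstacle to lie. Assuming $ka \equiv kb \imod{m}$, we have $m \mid k(a-b)$, so $dm' \mid dk'(a-b)$. Cancelling the common factor $d$ yields $m' \mid k'(a-b)$. Now I invoke Euclid's lemma (or the standard ``coprime divides a product'' result): since $\gcd(k', m') = 1$ and $m' \mid k'(a-b)$, it follows that $m' \mid (a-b)$, i.e. $a \equiv b \imod{m'}$. The delicate point is justifying $\gcd(k/d, m/d) = 1$, which I would record as a preliminary observation — it follows because any common divisor of $k'$ and $m'$ would, when multiplied by $d$, give a common divisor of $k$ and $m$ exceeding $d$, contradicting maximality of the gcd.

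\medskip

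\textbf{Anticipated obstacle.} The only genuine subtlety is the forward direction's appeal to Euclid's lemma; the reverse direction is a routine divisibility chase. I would make sure to state explicitly that dividing $dm' \mid dk'(a-b)$ by $d$ is legitimate (this is cancellation in $\Z$, valid since $d \neq 0$ as $d = \gcd(k,m) \geq 1$ whenever $k, m$ are not both zero). If one wishes to avoid Euclid's lemma, an alternative is to use Bézout: writing $1 = k'x + m'y$ for integers $x,y$, multiply through by $(a-b)$ to get $a - b = k'(a-b)x + m'(a-b)y$, and observe that $m'$ divides both terms on the right — the first because $m' \mid k'(a-b)$ by hypothesis, the second trivially — whence $m' \mid (a-b)$.
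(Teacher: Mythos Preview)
Your argument is correct in both directions; the decomposition $k = dk'$, $m = dm'$ with $\gcd(k',m') = 1$ followed by Euclid's lemma (or the B\'ezout alternative you sketch) is the standard textbook route. Note, however, that the paper does not actually supply a proof of this theorem: it is stated as a well-known number-theoretic result with references to \cite{MR1382656,MR1008474,MR2445243}, and the exposition moves directly to Corollary~\ref{3}. There is therefore no in-paper argument to compare against, and your write-up would serve perfectly well as the omitted proof.
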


In particular is the following useful case.

\begin{cor} \label{3} If $\gcd{(k,m)} = 1$, then $ka \equiv kb \imod{m}$ if and only if $a \equiv b \imod{m}$
\end{cor}

\begin{thm} \label{4} If $\left\{ a_0,a_1,\dots,a_{h-1} \right\}$ is a complete residue system $\imod{h}$ and $p \in \Z$, then $\left\{ p a_0, p a_1, \dots, p a_{h-1} \right\}$ is a {\it complete residue system} $\imod{h}$ if and only if $\gcd{(p,h)} = 1$.
\end{thm}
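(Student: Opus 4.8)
The plan is to prove both directions of the biconditional, and the natural tool is Corollary~\ref{3}, which characterizes exactly when multiplication by a fixed integer preserves congruence classes modulo $h$. Since being a CRS$(h)$ is captured by property (P2)---that distinct indices yield incongruent elements---I would phrase the entire argument in terms of (P2) applied to the two sets $\{a_i\}$ and $\{pa_i\}$.

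For the forward direction, I would argue the contrapositive: suppose $\gcd(p,h) = d > 1$. The goal is to exhibit two indices $i \neq j$ with $pa_i \equiv pa_j \imod{h}$, contradicting that $\{pa_0, \dots, pa_{h-1}\}$ is a CRS$(h)$. By Theorem~\ref{2}, $pa_i \equiv pa_j \imod{h}$ holds precisely when $a_i \equiv a_j \imod{(h/d)}$. Since $h/d < h$ and $\{a_0, \dots, a_{h-1}\}$ is a complete residue system $\imod{h}$ consisting of $h$ integers, these $h$ values cannot all be distinct modulo the smaller number $h/d$; by pigeonhole at least two of them collide modulo $h/d$, giving the desired pair $i \neq j$ with $pa_i \equiv pa_j \imod h$.

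For the reverse direction, I would assume $\gcd(p,h) = 1$ and verify (P2) directly for $\{pa_0, \dots, pa_{h-1}\}$. Suppose $pa_i \equiv pa_j \imod h$ for some indices $i,j$. By Corollary~\ref{3}, this forces $a_i \equiv a_j \imod h$, and since $\{a_0,\dots,a_{h-1}\}$ is a CRS$(h)$ satisfying (P2), we conclude $i = j$. Hence the map $pa_i \mapsto pa_i \imod h$ is injective, and by the Definition this makes $\{pa_0,\dots,pa_{h-1}\}$ a complete residue system $\imod h$.

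I expect the reverse direction to be essentially immediate from Corollary~\ref{3}, so the main obstacle is the forward (contrapositive) direction---specifically, cleanly justifying the pigeonhole collision. The subtlety is that one must use that the original set really has $h$ \emph{distinct} residues modulo $h$ (so $h$ genuinely different integers are in play) and then observe that reducing $h$ distinct classes modulo the proper divisor $h/d$ cannot remain injective. I would make sure to invoke Theorem~\ref{2} with the exact divisor $d = \gcd(p,h)$ to translate the congruence $pa_i \equiv pa_j \imod h$ into the modulus $h/d$, rather than attempting an ad hoc direct computation.
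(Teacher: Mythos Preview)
Your proposal is correct and follows essentially the same route as the paper: the sufficiency direction is immediate from Corollary~\ref{3}, and the necessity direction is argued by contrapositive using Theorem~\ref{2} to reduce to the modulus $h/d$. The only cosmetic difference is that where you invoke pigeonhole explicitly (``$h$ integers cannot all be distinct modulo $h/d < h$''), the paper phrases the same observation as ``$\{a_0,\dots,a_{h-1}\}$ is not a CRS$(h/d)$''; these are the same fact.
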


\begin{rem} Quite frequently in the literature (see \cite[Theorem 56]{MR2445243}, \cite[Theorem 3.5]{MR1382656}, \cite[Section 4.1, Exercise 10]{MR1008474}, \cite[Theorem 20(i)]{MR0201369}, or \cite[Theorem 62]{MR0092794}), one finds only {\it sufficiency}, however, as the following proof shows, {\it necessity} holds as well.
\end{rem}

\begin{proof} If $\gcd{(p,h)} = 1$, then, following \hyperref[3]{Corollary 3}, $p a_i \equiv p a_j \imod{h}$ if and only if $a_i \equiv a_j \imod{h}$, which holds if and only if $i =j$.

Conversely, if $\gcd{(p,h)}=d > 1$, then $\left\{ a_0, a_1, \dots, a_{h-1} \right\}$ is not a CRS$(h/d)$. Thus, there exist distinct integers $i$ and $j$ such that $a_i \equiv a_j \imod{(h/d)}$, which holds, by \hyperref[2]{Theorem 2}, if and only if $p a_i \equiv p a_j \imod{h}$, i.e., \\ $\left\{ p a_0, p a_1, \dots, \\ p a_{h-1}\right\}$ is not a CRS$(h)$.
\end{proof}

\section{The Application}

For $h > 1$, let $\omega := e^\frac{2 \pi i}{h} \in \C$ and $\Omega_h := \left\{ 1, \omega, \dots, \omega^{h-1}\right\} \subseteq \C$, i.e., $\Omega_h = \left\{ z \in \C: z^h - 1 = 0 \right\}$.

\begin{thm} \label{6} If $\left\{ a_0,a_1,\dots,a_{h-1} \right\}$ is a set of integers, then \\ $\left\{ \omega^{a_0}, \omega^{a_1}, \dots, \omega^{a_{h-1}} \right\} = \Omega_h$ if and only if $\left\{ a_0,a_1,\dots,a_{h-1} \right\}$ is a complete residue system $\imod{h}$.
\end{thm}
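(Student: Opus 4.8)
The plan is to prove both directions by exploiting the fundamental fact that $\omega^a = \omega^b$ if and only if $a \equiv b \imod{h}$, which follows directly from the definition $\omega = e^{2\pi i/h}$ and the periodicity of the complex exponential. This observation is the engine that translates statements about equality of roots of unity into congruence statements, and it lets me connect the hypothesis of this theorem to the injectivity/surjectivity formulation in the definition of a CRS$(h)$.

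First I would establish the key lemma (or simply cite the standard fact) that the map $\Z \to \Omega_h$ sending $a \mapsto \omega^a$ has the property $\omega^{a_i} = \omega^{a_j}$ if and only if $a_i \equiv a_j \imod{h}$. With this in hand, I would argue the reverse direction: suppose $\left\{ a_0, \dots, a_{h-1} \right\}$ is a CRS$(h)$. By property (P2), the residues $a_i \imod{h}$ are pairwise distinct, so by the key fact the powers $\omega^{a_0}, \dots, \omega^{a_{h-1}}$ are pairwise distinct. Since $\Omega_h$ has exactly $h$ elements and each $\omega^{a_i}$ lies in $\Omega_h$, a set of $h$ distinct elements of $\Omega_h$ must equal $\Omega_h$, giving $\left\{ \omega^{a_0}, \dots, \omega^{a_{h-1}} \right\} = \Omega_h$.

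For the forward direction, I would assume $\left\{ \omega^{a_0}, \dots, \omega^{a_{h-1}} \right\} = \Omega_h$ and show $\left\{ a_0, \dots, a_{h-1} \right\}$ is a CRS$(h)$. Because the set on the left has $h$ elements matching all of $\Omega_h$, the powers $\omega^{a_i}$ must be pairwise distinct; otherwise the listed set would have fewer than $h$ distinct elements and could not equal the $h$-element set $\Omega_h$. Then the key fact forces $a_i \not\equiv a_j \imod{h}$ whenever $i \neq j$, which is exactly property (P2), so the residue map is injective and $\left\{ a_0, \dots, a_{h-1} \right\}$ is a CRS$(h)$.

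The main obstacle, such as it is, lies in the counting argument rather than in any congruence manipulation: I must be careful to argue that a set-theoretic equality of $\left\{ \omega^{a_0}, \dots, \omega^{a_{h-1}} \right\}$ with $\Omega_h$ genuinely forces the $h$ powers to be distinct. Since the indexed family has $h$ terms and the target set has cardinality exactly $h$, any collision among the $\omega^{a_i}$ would reduce the cardinality of the image below $h$, contradicting the equality; making this pigeonhole-style bookkeeping precise is the only step requiring slight care. Everything else reduces cleanly to the equivalence between distinctness of the powers and distinctness of the residues modulo $h$.
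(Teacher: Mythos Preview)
Your proposal is correct and follows essentially the same approach as the paper: both directions hinge on the equivalence $\omega^{a_i}=\omega^{a_j}\Leftrightarrow a_i\equiv a_j\pmod{h}$, which the paper establishes by an explicit exponential computation and you isolate as a key lemma. The only cosmetic difference is that the paper phrases the ``CRS $\Rightarrow$ equality'' direction via surjectivity of the residue map (writing $\omega^{a_k}=\omega^{q_k}$ and identifying $\{\omega^{q_k}\}$ with $\Omega_h$) and the converse by contrapositive, whereas you use injectivity (P2) plus a cardinality count; by Definition~1 these framings are equivalent.
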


\begin{proof} If $\left\{ a_0, a_1, \dots, a_{h-1} \right\}$ is a CRS$(h)$, then, for each $k \in \{0, 1, \dots, h-1\}$, there exists one (and only one) $q_k \in \{0, 1, \dots, h-1 \}$ such that $a_k \equiv q_k \imod{h}$, i.e., there exists an integer $l_k$ such that $a_k = h l_k + q_k$. For each such $k$, 
\[ \omega^{a_k} = \omega^{h l_k + q_k} = \left( e^\frac{2 \pi i}{h} \right)^{h l_k + q_k} = e^{2 \pi l_k i + \frac{2 \pi q_k i}{h}} = e^{2 \pi l_k i} \cdot e^\frac{2 \pi q_k i}{h} = \left( e^\frac{2 \pi i}{h} \right)^{q_k} = \omega^{q_k}, \]
so that $\left\{ \omega^{a_0}, \omega^{a_1}, \dots, \omega^{a_{h-1}} \right\} = \left\{ \omega^{q_0}, \omega^{q_1}, \dots, \omega^{q_{h-1}} \right\} = \Omega_h$.

Conversely, suppose $\left\{ a_0, a_1, \dots, a_{h-1}\right\}$ is not a CRS$(h)$, and for every $k$, let $q_k := a_k \imod{h}$. Then $\left\{ q_0, q_1, \dots, q_{h-1} \right\} \subset \left\{ 0, 1, \dots, h-1 \right\}$ so that $\left\{ \omega^{a_0}, \omega^{a_1}, \dots, \omega^{a_{h-1}} \right\} \neq \Omega_h$.
\end{proof}

\begin{cor} \label{7} For $p > 1$, let $\Omega_h^p := \left\{ 1, \omega^p, \dots, \omega^{(h-1) p} \right\}$. Then $\Omega_h^p = \Omega_h$ if and only if $\gcd{(p,h)} = 1$. 
\end{cor}

\begin{lem} If $a$, $b$, $c$, and $h \in \Z$, then $a \equiv b \imod{h}$ if and only if $(a+c) \equiv (b + c) \imod{h}$.  
\end{lem}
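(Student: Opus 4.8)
The plan is to prove both directions directly from the definition of congruence modulo $h$, since the statement $a \equiv b \imod{h}$ means precisely that $h$ divides $a - b$. The entire lemma hinges on the algebraic observation that adding the same constant $c$ to both $a$ and $b$ leaves their difference unchanged, and divisibility depends only on that difference. So the proof should require essentially no machinery beyond unwinding the definition.

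First I would establish the forward direction. Assuming $a \equiv b \imod{h}$, by definition $h \mid (a - b)$. The key step is the identity $(a + c) - (b + c) = a - b$, which shows the two differences are literally equal. Hence $h \mid \big( (a + c) - (b + c) \big)$, which is exactly the statement $(a + c) \equiv (b + c) \imod{h}$.

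For the converse I would run the same argument in reverse. Assuming $(a + c) \equiv (b + c) \imod{h}$, we have $h \mid \big( (a + c) - (b + c) \big)$, and again invoking $(a + c) - (b + c) = a - b$ gives $h \mid (a - b)$, i.e.\ $a \equiv b \imod{h}$. Because the crucial identity is an equality rather than a one-way implication, both directions follow from the single observation, and one could even phrase the whole proof as a chain of biconditionals: $a \equiv b \imod{h}$ iff $h \mid (a-b)$ iff $h \mid \big((a+c)-(b+c)\big)$ iff $(a+c) \equiv (b+c) \imod{h}$.

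Honestly, there is no real obstacle here; this lemma is a routine translation-invariance property of congruences, and the only thing to be careful about is citing the correct definition of $\equiv \imod{h}$ in terms of divisibility so that the step $(a+c)-(b+c) = a-b$ can do its work. The main value of writing it out is that this additive cancellation is presumably needed as a building block for a subsequent result (likely a shift or translation statement about complete residue systems or about the sets $\Omega_h$), so I would keep the proof crisp and symmetric to make it easy to reference later.
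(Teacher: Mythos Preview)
Your proposal is correct and mirrors the paper's own proof almost exactly: the paper presents the argument as the single biconditional chain $a \equiv b \imod{h} \Leftrightarrow a - b = hk \Leftrightarrow (a+c)-(b+c) = hk \Leftrightarrow (a+c) \equiv (b+c) \imod{h}$, which is precisely the route you outline in your final paragraph.
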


\begin{proof} Note that 
\begin{align*}
a \equiv b \imod{h} 
&\Leftrightarrow a - b = hk \tag{$k \in \Z$} 			\\
&\Leftrightarrow (a + c) - (b + c) = hk				\\
&\Leftrightarrow (a+c) \equiv (b + c) \imod{h}. \qedhere
\end{align*}
\end{proof}

\begin{cor} \label{9} If $\left\{ a_0,a_1,\dots,a_{h-1} \right\}$ is a {\it complete residue system} $\imod{h}$ and $l$ is any integer, then $\left\{ p a_0 + l, p a_1 + l, \dots, p a_{h-1} + l \right\}$ is a {\it complete residue system} $\imod{h}$ if and only if $\gcd{(p,h)}=1$.
\end{cor}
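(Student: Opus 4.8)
The plan is to reduce Corollary~\ref{9} to \hyperref[4]{Theorem~\ref{4}} by combining it with the shift-invariance Lemma that immediately precedes the statement. The key observation is that adding a fixed integer $l$ to every element of a set is precisely the ``$+c$'' operation of the Lemma applied uniformly, so it cannot change whether the set is a complete residue system. First I would establish this as the central intermediate claim: for any set of integers $\{b_0, b_1, \dots, b_{h-1}\}$ and any fixed $l \in \Z$, the set $\{b_0, b_1, \dots, b_{h-1}\}$ is a CRS$(h)$ if and only if $\{b_0 + l, b_1 + l, \dots, b_{h-1} + l\}$ is a CRS$(h)$.

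To prove this intermediate claim, I would argue via property (P2). By definition, $\{b_0, \dots, b_{h-1}\}$ is a CRS$(h)$ exactly when $b_i \not\equiv b_j \imod{h}$ for all $i \neq j$. Applying the Lemma with $a = b_i$, $b = b_j$, and $c = l$, we have $b_i \equiv b_j \imod{h}$ if and only if $b_i + l \equiv b_j + l \imod{h}$, so the two sets satisfy the pairwise non-congruence condition simultaneously. Hence one is a CRS$(h)$ precisely when the other is.

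With the intermediate claim in hand, the corollary follows in two short moves. Setting $b_i := p a_i$, \hyperref[4]{Theorem~\ref{4}} tells us that $\{p a_0, p a_1, \dots, p a_{h-1}\}$ is a CRS$(h)$ if and only if $\gcd(p,h) = 1$. By the intermediate claim (with the set $\{p a_0, \dots, p a_{h-1}\}$ and shift $l$), the shifted set $\{p a_0 + l, \dots, p a_{h-1} + l\}$ is a CRS$(h)$ if and only if $\{p a_0, \dots, p a_{h-1}\}$ is. Chaining these two equivalences gives the desired result.

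I do not anticipate a genuine obstacle here, since the argument is a clean composition of two already-proven facts; the only point requiring minor care is the quantification in the intermediate claim. One must check that the Lemma's equivalence holds for \emph{every} pair $(i,j)$ using the \emph{same} shift $l$, so that the failure (or success) of the complete-residue-system condition transfers globally rather than pair-by-pair. This is immediate because $l$ is fixed across all indices, but it is worth stating explicitly so that the logical structure---``all pairs distinct modulo $h$'' is preserved under a uniform translation---is transparent.
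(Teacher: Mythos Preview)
Your proposal is correct and matches the paper's intended argument: the paper states Corollary~\ref{9} without an explicit proof, placing it immediately after the shift-invariance Lemma precisely because it follows by combining that Lemma with \hyperref[4]{Theorem~\ref{4}} in exactly the way you describe. Your write-up simply makes explicit what the paper leaves implicit.
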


\begin{lem} \label{10} If $a$, $b$, $c$ $d$ and $h \in \Z$, then $(a + hc) \equiv (b + hd) \imod{h}$ if and only if $a \equiv b \imod{h}$.
\end{lem}

\begin{proof} Note that 
\begin{align*}
a \equiv b \imod{h} 
&\Leftrightarrow a - b = hk \tag{$k \in \Z$} 			\\
&\Leftrightarrow (a + hc) - (b + hd) = h(k + c - d)		\\
&\Leftrightarrow (a+c) \equiv (b + c) \imod{h}. \qedhere
\end{align*}
\end{proof}

\begin{cor} If $\left\{ a_0,a_1,\dots,a_{h-1} \right\}$ and $\left\{ l_0, l_1, \dots, l_{h-1} \right\}$ are sets of integers, then $\left\{ a_0, a_1, \dots, a_{h-1} \right\}$ is a {\it complete residue system} $\imod{h}$ if and only if $\left\{ a_0 + h l_0, a_1 + h l_1, \dots, a_{h-1} + h l_{h-1} \right\}$ is a {\it complete residue system} $\imod{h}$.
\end{cor}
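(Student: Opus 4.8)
The plan is to reduce the claimed equivalence to a termwise application of Lemma~\ref{10}. Recall from property (P2) that a set of $h$ integers is a CRS$(h)$ precisely when its members are pairwise incongruent $\imod{h}$; that is, $\{a_0, a_1, \dots, a_{h-1}\}$ is a CRS$(h)$ if and only if $a_i \not\equiv a_j \imod{h}$ whenever $i \neq j$. The identical characterization applies to the shifted set $\{a_0 + h l_0, \dots, a_{h-1} + h l_{h-1}\}$, so it suffices to show that these two pairwise-incongruence conditions are equivalent.

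To that end, I would fix a pair of distinct indices $i$ and $j$ and apply Lemma~\ref{10} with $a = a_i$, $b = a_j$, $c = l_i$, and $d = l_j$. This yields at once that $(a_i + h l_i) \equiv (a_j + h l_j) \imod{h}$ if and only if $a_i \equiv a_j \imod{h}$, and hence, taking contrapositives, that $(a_i + h l_i) \not\equiv (a_j + h l_j) \imod{h}$ if and only if $a_i \not\equiv a_j \imod{h}$.

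Since this equivalence holds for every pair of distinct indices, the condition ``$a_i \not\equiv a_j \imod{h}$ for all $i \neq j$'' holds if and only if the condition ``$(a_i + h l_i) \not\equiv (a_j + h l_j) \imod{h}$ for all $i \neq j$'' holds. By the characterization recalled above, the former says precisely that $\{a_0, \dots, a_{h-1}\}$ is a CRS$(h)$, while the latter says precisely that $\{a_0 + h l_0, \dots, a_{h-1} + h l_{h-1}\}$ is a CRS$(h)$, which is exactly the asserted biconditional.

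I do not anticipate a genuine obstacle here, as the result is essentially a termwise reformulation of Lemma~\ref{10}. The only point demanding mild care is the logical bookkeeping around the universal quantifier over index pairs, ensuring that the per-pair biconditional is transported uniformly to the ``for all $i \neq j$'' statements rather than being invoked for a single pair.
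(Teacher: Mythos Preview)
Your proposal is correct and follows essentially the same approach as the paper: both arguments reduce the corollary to a pairwise application of Lemma~\ref{10}, with the paper phrasing it via the contrapositive (failure of CRS$(h)$ means some distinct pair is congruent) and you phrasing it via the direct characterization (P2). The logical content is identical.
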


\begin{proof} If $\left\{ a_0 + h l_0, a_1 + h l_1, \dots, a_{h-1} + h l_{h-1} \right\}$ is not a CRS$(h)$, then there exist distinct integers $i$, $j \in \left\{0, 1, \dots, h-1 \right\}$  such that $(a_i+h l_i) \equiv (a_j + h l_j) \imod{h}$. Following \hyperref[10]{Lemma 10}, $(a_i+h l_i) \equiv (a_j + h l_j) \imod{h}$ if and only if $a_i \equiv a_j \imod{h}$; i.e., if and only if $\left\{ a_0, a_1, \dots, a_{h-1} \right\}$ is not a CRS$(h)$.
\end{proof}

\begin{mydef} For $z = r e^{i \theta} \in \C$, let $z^\frac{1}{p} := r^\frac{1}{p} e^\frac{i \theta}{p}$ and, for $l \in \left\{ 0, 1, \dots, p-1 \right\}$, let $f_l (z) := z^\frac{1}{p} e^\frac{2 \pi l i}{p} = r^\frac{1}{p} e^\frac{(\theta + 2 \pi l) i}{p}$. Thus, $f_l$ denotes the $(l+1)^\tth$ branch of the $p^\tth$-root function.
\end{mydef}

\begin{thm} \label{13} If $L := \left\{ l = \left( l_0, l_1, \dots, l_{h-1} \right) : l_k \in \left\{ 0, 1, \dots, p-1 \right\} \right\}$, and $(\Omega_h)_l^\frac{1}{p} := \left\{ f_{l_0} ( 1 ), f_{l_1} ( \omega ), \dots, f_{l_{h-1}} ( \omega^{h-1} ) \right\}$, then there exists $l \in L$ such that $(\Omega_h)_l^\frac{1}{p} = \Omega_h$ if and only if $\gcd{(h,p)=1}$.
\end{thm}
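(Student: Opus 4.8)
The plan is to rewrite every element of $(\Omega_h)_l^{1/p}$ as a power of a single primitive $hp$-th root of unity, thereby reducing the set equation to a statement about residues. Put $\zeta := e^{2 \pi i/(hp)}$, so that $\omega = \zeta^p$ and $\Omega_h = \left\{ \zeta^0, \zeta^p, \dots, \zeta^{(h-1)p} \right\}$. Writing $\omega^k = e^{i \theta}$ with $\theta = 2 \pi k/h$ and applying the definition of $f_{l_k}$, a direct computation gives
\[ f_{l_k}(\omega^k) = e^{(\theta + 2 \pi l_k) i/p} = e^{2 \pi i (k + h l_k)/(hp)} = \zeta^{k + h l_k}. \]
Thus $(\Omega_h)_l^{1/p} = \left\{ \zeta^{k + h l_k} : k = 0, 1, \dots, h-1 \right\}$, while $\Omega_h = \left\{ \zeta^{pj} : j = 0, 1, \dots, h-1 \right\}$. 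Since $\zeta$ is primitive, $\zeta^a = \zeta^b$ if and only if $a \equiv b \imod{hp}$, so the desired equality of sets is equivalent to equality, modulo $hp$, of the exponent sets $\{ k + h l_k \}$ and $\{ pj \}$.

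The key structural observation is that, as $k$ runs over $\{0, \dots, h-1\}$, the exponent satisfies $k + h l_k \equiv k \imod{h}$; hence the left-hand exponent set meets each residue class mod $h$ in exactly one point, and, by the freedom to choose $l_k \in \{0, \dots, p-1\}$, it can be made to equal any prescribed set that contains exactly one element of $\{0, 1, \dots, hp-1\}$ from each residue class mod $h$. Therefore there exists $l \in L$ with $(\Omega_h)_l^{1/p} = \Omega_h$ if and only if the right-hand set $\{0, p, \dots, (h-1)p\}$ is itself such a transversal, i.e., if and only if its reduction modulo $h$, namely $\{ p \cdot 0, p \cdot 1, \dots, p \cdot (h-1) \} \imod{h}$, is a CRS$(h)$.

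At this point I would invoke \hyperref[4]{Theorem 4}: since $\{0, 1, \dots, h-1\}$ is a CRS$(h)$, the set $\{ p \cdot 0, \dots, p \cdot (h-1) \}$ is a CRS$(h)$ if and only if $\gcd(p,h) = 1$, which closes the equivalence. I expect the main obstacle to be the bookkeeping in the structural observation, specifically the need to justify that the choices of $l_k$ realize \emph{every} such transversal inside $\{0, \dots, hp-1\}$; this rests on the fact that $(k, l_k) \mapsto k + h l_k$ is a bijection from $\{0, \dots, h-1\} \times \{0, \dots, p-1\}$ onto $\{0, 1, \dots, hp-1\}$ (the division algorithm mod $h$). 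Once this bijection is pinned down, both directions follow at once; if a cleaner write-up is desired, necessity can instead be argued by contradiction (when $\gcd(p,h) > 1$ the multiples of $p$ repeat a residue class mod $h$, whereas the left set never does) and sufficiency by the explicit construction of the $l_k$.
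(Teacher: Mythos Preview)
Your argument is correct. By passing to the primitive $hp$-th root $\zeta$ you keep all exponents integral, and the bijection $(k,l_k)\mapsto k+hl_k$ from $\{0,\dots,h-1\}\times\{0,\dots,p-1\}$ onto $\{0,\dots,hp-1\}$ (division with remainder) makes precise that the achievable exponent sets are exactly the mod-$h$ transversals in $\{0,\dots,hp-1\}$; since the target exponent set $\{0,p,\dots,(h-1)p\}$ also lies in $\{0,\dots,hp-1\}$, the question collapses to whether $\{p\cdot 0,\dots,p\cdot(h-1)\}$ is a CRS$(h)$, and \hyperref[4]{Theorem~4} finishes both directions at once.

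The paper takes a different route. It writes $f_{l_k}(\omega^k)=\omega^{(k+hl_k)/p}$ with a possibly non-integral exponent, and then treats the two directions separately: when $\gcd(p,h)=1$ it uses \hyperref[9]{Corollary~9} to pick each $l_k$ so that $p\mid k+hl_k$, and then \hyperref[3]{Corollary~3} together with \hyperref[10]{Lemma~10} to show the resulting integers $(k+hl_k)/p$ form a CRS$(h)$; when $\gcd(p,h)>1$ it argues by contradiction that some $k+hl_k$ cannot be made divisible by $p$, because otherwise the quotients would yield a set $\{pq_k\}$ that is a CRS$(h)$, contradicting \hyperref[4]{Theorem~4}. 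Your single reduction via $\zeta$ is more symmetric and sidesteps the fractional-exponent bookkeeping, while the paper's version has the virtue of exercising the string of auxiliary results (\hyperref[9]{Corollary~9}, \hyperref[3]{Corollary~3}, \hyperref[10]{Lemma~10}) built up earlier in the section.
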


\begin{rem} \label{14} For any $k \in \{ 0,1,\dots,h-1 \}$, note that  
\[ f_{l_k} ( \omega^k ) = \omega^\frac{k}{p} \cdot e^\frac{2\pi l_k i}{p} = e^\frac{2\pi k i}{hp} \cdot e^\frac{2\pi l_k i}{p}	
= e^\frac{2 \pi i \left( k + hl_k \right)}{hp} = \left( e^\frac{2 \pi i}{h} \right)^{\frac{k + h l_k}{p}} = \omega^{\frac{k + h l_k}{p}} \]
so that $(\Omega_h)_l^\frac{1}{p} = \left\{ \omega^{\frac{h l_0}{p}}, \omega^{\frac{1 + h l_1}{p}}, \dots, \omega^{\frac{(h-1) + h l_{h-1}}{p}} \right\}$.
\end{rem}

\begin{proof}
If $\gcd{(p,h)} = 1$, then, following \hyperref[9]{Corollary 9}, for each $k$, the set $\left\{ k + hl \right\}_{l=0}^{p-1}$ is a CRS$(p)$; in particular, there exists $l_k \in\left\{ 0, 1, \dots, p-1 \right\}$ such that $k + h l_k \equiv 0 \imod{p}$, i.e., $k+ h l_k$ is divisible by $p$. Moreover, the set 
\[ \left\{ \frac{k + h l_k}{p} \right\}_{k=0}^{h-1} \] 
is a CRS$(h)$; indeed, following \hyperref[3]{Corollary 3}, 
\[ \left( \frac{i + h l_i}{p} \right) \equiv \left( \frac{j + h l_j}{p} \right) \imod{h} \] 
if and only if $\left( i + h l_i \right) \equiv \left( j + h l_j \right) \imod{h}$, which holds, following \hyperref[10]{Lemma 10}, if and only if $i \equiv j \imod{h}$. Since $\{ 0, 1, \dots, h-1 \}$ is a CRS$(h)$, it follows that $i=j$, and the claim is established.

Conversely, suppose $\gcd{(p,h)} = d > 1$. We claim that for at least one $k$, there does not exist $l_k \in \{ 0, 1, \dots, p-1 \}$ such that $k + h l_k \equiv 0 \imod{p}$; suppose the assertion is false so that for each $k$, there exists $l_k$ such that $k + h l_k \equiv 0 \imod{p}$, i.e., there exists an integer $q_k$ such that $k+ h l_k = p q_k$, or $ h l_k = p q_k - k$ so that $p q_k \equiv k \imod{h}$. Since $k \in \{ 0, 1, \dots, h-1 \}$, the set $\left\{ p q_k \right\}_{k=0}^{h-1}$ is a CRS$(h)$, establishing a contradiction since $\gcd{(p,h)} \neq 1$. Following \hyperref[14]{Remark 14}, at least one exponent in the set $(\Omega_h)_l^\frac{1}{p} := \left\{ \omega^{\frac{k + h l_k}{p}} \right\}_{k=0}^{h-1}$ is not an integer so that $(\Omega_h)_l^\frac{1}{p} \neq \Omega_h$.
\end{proof}

\begin{cor} If $ L := \left\{ l = \left( l_0, l_1, \dots, l_{h-1} \right) : l_k \in \left\{ 0, 1, \dots, p-1 \right\} \right\}$, and $(\Omega_h)_l^\frac{q}{p} := \left( \Omega_h^q \right)_l^\frac{1}{p}$ or $(\Omega_h)_l^\frac{q}{p} := \left( \left( \Omega_h \right)_l^\frac{1}{p} \right)^q$, then there exists $l \in L$ such that $(\Omega_h)_l^\frac{q}{p} = \Omega_h$ if and only if $\gcd{(h,p)} = \gcd{(h,q)} = 1$.
\end{cor}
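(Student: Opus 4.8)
The plan is to reduce the corollary to two results already established: \hyperref[7]{Corollary 7}, which controls when the $q$-th-power operation carries $\Omega_h$ onto itself, and \hyperref[13]{Theorem 13}, which controls when the branch-wise $p$-th-root operation can be steered back onto $\Omega_h$. The guiding idea is that the rational exponent $q/p$ is a composite of these two operations, so each of the two definitions offered in the statement factors either as ``$q$-th power followed by $p$-th root'' or as ``$p$-th root followed by $q$-th power''; in both orders the root step will be responsible for the condition $\gcd(h,p)=1$ and the power step for $\gcd(h,q)=1$. I will work throughout at the level of exponents via \hyperref[14]{Remark 14}: every element produced is $\omega^{c}$ for an explicit rational $c$, and since $\omega^{c}\in\Omega_h$ exactly when $c\in\Z$, the equality with $\Omega_h$ is equivalent to all exponents being integers together with those integers forming a complete residue system $\imod{h}$ (by \hyperref[6]{Theorem 6}).

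For the ``if'' direction, assume $\gcd(h,p)=\gcd(h,q)=1$. Under the definition $(\Omega_h)_l^{q/p}=\left((\Omega_h)_l^{1/p}\right)^q$, I would first invoke \hyperref[13]{Theorem 13} to pick $l\in L$ with $(\Omega_h)_l^{1/p}=\Omega_h$, and then apply \hyperref[7]{Corollary 7} to obtain $\left((\Omega_h)_l^{1/p}\right)^q=\Omega_h^q=\Omega_h$. Under the definition $(\Omega_h)_l^{q/p}=(\Omega_h^q)_l^{1/p}$ I would reverse the order: \hyperref[7]{Corollary 7} gives $\Omega_h^q=\Omega_h$ as a relisting of the same roots, and \hyperref[13]{Theorem 13} then supplies an $l$ with $(\Omega_h^q)_l^{1/p}=\Omega_h$, the existential quantifier over $l$ absorbing the relisting so that the permuted order is harmless. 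Either way a witnessing $l$ exists.

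The main obstacle is the ``only if'' direction, where both gcd conditions must be drawn out of a single hypothesis. Taking the definition $\left((\Omega_h)_l^{1/p}\right)^q$, \hyperref[14]{Remark 14} writes the $k$-th element as $\omega^{q(k+hl_k)/p}$, and the other definition is handled analogously. Integrality of every exponent forces $p\mid q(k+hl_k)$ for each $k$; separating the contribution of $p$ from that of $q$ (this is where the coprimality implicit in the reduced fraction $q/p$ enters, collapsing $p\mid q(k+hl_k)$ to $p\mid(k+hl_k)$) reduces the requirement to exactly the one treated in the converse half of \hyperref[13]{Theorem 13}, which is unattainable for every $k$ unless $\gcd(h,p)=1$. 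Granting $\gcd(h,p)=1$, the integer exponents become $q\,m_k$ where $m_k=(k+hl_k)/p$, and \hyperref[3]{Corollary 3} together with \hyperref[10]{Lemma 10} shows $\{m_k\}$ is a complete residue system $\imod{h}$; since the hypothesis makes $\{q\,m_k\}$ a complete residue system as well, \hyperref[4]{Theorem 4} forces $\gcd(h,q)=1$. The two places demanding care are this disentangling of $p$ and $q$ in the divisibility condition, and verifying that the first definition yields the identical pair of conclusions, so that the stated equivalence holds under either reading.
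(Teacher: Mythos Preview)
The paper states this corollary without proof, so there is no argument to compare yours against directly; judged on its own, your proposal is sound. The decomposition into \hyperref[7]{Corollary 7} for the $q$-th-power step and \hyperref[13]{Theorem 13} for the $p$-th-root step is the natural one, and your only-if argument---integrality of the exponents, then the converse half of \hyperref[13]{Theorem 13} to force $\gcd(h,p)=1$, then \hyperref[6]{Theorem 6} and \hyperref[4]{Theorem 4} applied to the resulting CRS $\{m_k\}$ to force $\gcd(h,q)=1$---is correct and invokes exactly the right earlier results.

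You are also right to single out the hidden hypothesis $\gcd(p,q)=1$: without it the corollary is false. For instance, with $h=p=q=2$ one has $\bigl((\Omega_2)_l^{1/2}\bigr)^2=\{1,-1\}=\Omega_2$ for every $l\in L$, yet $\gcd(h,p)=2$. So the step where you collapse $p\mid q(k+hl_k)$ to $p\mid(k+hl_k)$ is not a technicality but the place where the corollary lives or dies; when you write the proof out, state this assumption explicitly rather than leaving it ``implicit in the reduced fraction.''

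The one loose end is the first definition $(\Omega_h^q)_l^{1/p}$, which you defer. There the $k$-th exponent is $(kq+hl_k)/p$, and the same chain goes through with only cosmetic changes: integrality gives $p\mid(kq+hl_k)$; since $\gcd(p,q)=1$, solvability of $hl_k\equiv -kq\pmod{p}$ for every $k$ forces $\gcd(h,p)=1$ (otherwise $k=1$ already fails); and once the exponents are integers, \hyperref[3]{Corollary 3} and \hyperref[10]{Lemma 10} reduce the CRS condition to $\{kq\}_{k=0}^{h-1}$ being a CRS$(h)$, which by \hyperref[4]{Theorem 4} forces $\gcd(h,q)=1$.
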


\bibliographystyle{plain}
\bibliography{crs}

\end{document}